\newtheorem{theorem}{Theorem}
\numberwithin{theorem}{section}
\newtheorem{corollary}[theorem]{Corollary}
\newtheorem{proposition}[theorem]{Proposition}
\theoremstyle{definition}
\newtheorem{definition}[theorem]{Definition}
\newtheorem{remark}[theorem]{Remark}
\numberwithin{equation}{section}
\title[Fra\"iss\'e's conjecture and partial impredicativity]{Fra\"iss\'e's conjecture, partial impredicativity\\ and well-ordering principles, part~I}
\author{Anton Freund}
\address{University of W\"urzburg, Institute of Mathematics, Emil-Fischer-Str.~40, 97074 W\"urz\-burg, Germany}
\email{anton.freund@uni-wuerzburg.de}
\thanks{Funded by the Deutsche Forschungsgemeinschaft (DFG, German Research Foundation) -- Project number 460597863.}
\begin{document}

\begin{abstract}
Fra\"iss\'e's conjecture (proved by Laver) is implied by the~$\Pi^1_1$-com\-prehension axiom of reverse mathematics, as shown by Montalb\'an. The implication must be strict for reasons of quantifier complexity, but it seems that no better bound has been known. We locate such a bound in a hierarchy of Suzuki and Yokoyama, which extends Towsner's framework of partial impredicativity. Specifically, we show that Fra\"iss\'e's conjecture is implied by a principle of pseudo $\Pi^1_1$-comprehension. As part of the proof, we introduce a cofinite version of the $\Delta^0_2$-Ramsey theorem, which may be of independent interest. We also relate pseudo $\Pi^1_1$-comprehension to principles of pseudo $\beta$-model reflection (due to Suzuki and Yokoyama) and reflection for $\omega$-models of transfinite induction (studied by Rathjen and Valencia-Vizca\'ino). In a forthcoming companion paper, we characterize pseudo \mbox{$\Pi^1_1$-}compre\-hension by a well-ordering principle, to get a transparent combinatorial bound for the strength of Fra\"iss\'e's conjecture.
\end{abstract}

\keywords{Fra\"iss\'e's conjecture, Partial impredicativity, Well-Ordering Principle, Reverse mathematics}
\subjclass[2020]{06A07, 03B30, 03F35}

\maketitle

\section{Introduction}

By Fr\"aiss\'e's conjecture, we mean the statement that any infinite sequence of countable linear orders $L_0,L_1,\ldots$ admits $i<j$ such that $L_i$ embeds into~$L_j$. Laver has proved that this conjecture holds, even for $\sigma$-scattered rather than countable orders. When formalized in the framework of reverse mathematics~\cite{friedman-rm,simpson09}, Laver's proof uses the extremely strong axiom of $\Pi^1_2$-comprehension. By ground\-breaking work of Montalb\'an~\cite{montalban-fraisse}, the strong but much weaker axiom of $\Pi^1_1$-compre\-hension suffices for a proof of Fra\"iss\'e's conjecture. This upper bound must be strict for reasons of quantifier complexity. Specifically, Fra\"iss\'e's conjecture is a $\Pi^1_2$-statement, and no such statement can be equivalent to $\Pi^1_1$-comprehension (use Theorem~VII.2.10 of~\cite{simpson09} to show that $\Pi^1_1$-comprehension entails the consistency of its $\Pi^1_2$-consequences). The best lower bound from the literature, which is due to Shore~\cite{shore-comp-wos}, says that Fra\"iss\'e's conjecture implies arithmetical transfinite recursion (see~\cite{freund-manca-fraisse} for a small correction). It is not known whether the latter suffices to prove Fra\"iss\'e's conjecture.

There are several mathematical theorems of complexity $\Pi^1_2$ for which the only known or the most `natural' proof establishes $\Pi^1_1$-comprehension as an upper bound, which is necessarily suboptimal. To deal with this situation, Towsner~\cite{partial-impred} has introduced axiom systems for `partial impredicativity'. Inspired by the functional interpretation that is also used in proof mining~\cite{kohlenbach-book}, these replace $\Pi^1_1$-comprehension by certain $\Pi^1_2$-approximations, which support relatively straightforward modifications of the natural proofs. In particular, Towsner shows that well-known theorems of Menger and Nash-Williams can can be derived from his strongest axiom $\mathsf{TLPP}$ (`trans\-finite leftmost path principle'), while a weaker version of the axiom suffices for the elegant proof of \mbox{Kruskal's theorem} via (relatively) minimal bad sequences.

There have been important developments concerning partial impredicativity in recent years. In particular, Fern\'andez-Duque, Shafer, Towsner and Yokoyama~\cite{TLPP-Caristi} established the first reversal by showing that $\mathsf{TLPP}$ is equivalent to a version of Caristi's theorem. Suzuki and Yokoyama~\cite{suzuki-yokoyama} defined a hierarchy that starts with Towsner's systems and exhausts all \mbox{$\Pi^1_2$-}consequences of $\Pi^1_1$-comprehension. More precisely, $\mathsf{TLPP}$ lies between the first two stages of this hierarchy. The same holds for the pseudo $\Pi^1_1$-comprehension axioms from the following definition, as we show in Section~\ref{sect:pseudo-Pi11}. Indeed, pseudo $\Pi^1_1$-comprehension is essentially the same as a pseudo hyperjump principle and equivalent to a reflection principle for pseudo $\beta$-models, which were both introduced by Suzuki and Yokoyama~\cite{suzuki-yokoyama}. We shall also establish a connection with reflection for $\omega$-models of transfinite induction (also known as bar induction), as studied by Rathjen and Valencia-Vizca\'ino~\cite{rathjen-model-bi}. The following relies on standard notions from reverse mathematics, which are explained, e.\,g., in~\cite{simpson09}.

\begin{definition}\label{def:Pi11X}
For an $\omega$-model~$\mathcal M\ni X$, we write $\mathcal M\vDash\Pi^1_1\textsf{-CA}_0(X)$ to express that $\mathcal M$ contains $\{x\,|\,\mathcal M\vDash\varphi(x,X)\}$ for any $\Pi^1_1$-formula~$\varphi$ with no set parameters other than~$X$. The pseudo $\Pi^1_1$-comprehension axioms $\Pi^1_1\textsf{-CA}^\varepsilon$ and $\Pi^1_1\textsf{-CA}^\Gamma$ assert that any set~$X\subseteq\mathbb N$ admits an $\omega$-model~$\mathcal M\ni X$ with $\mathcal M\vDash\Pi^1_1\textsf{-CA}_0(X)$ and $\mathcal M\vDash\mathsf{ACA}_0$ or~$\mathcal M\vDash\mathsf{ATR}_0$, respectively. For $\star\in\{\varepsilon,\Gamma\}$, we let $\Pi^1_1\textsf{-CA}_0^\star$ denote $\mathsf{ACA}_0+\Pi^1_1\textsf{-CA}^\star$.
\end{definition}

Let us note that $\Pi^1_1\textsf{-CA}_0^\Gamma$ contains~$\mathsf{ATR}_0$ (since a $\Pi^1_2$-state\-ment is true when it holds in some~$\omega$-model from every cone). In fact, $\Pi^1_1\textsf{-CA}_0^\varepsilon$ contains~$\mathsf{ATR}_0$ as well, by the proof of Proposition~\ref{prop:Pi11eps-BI} below (or by a more direct argument). To explain our terminology and notation, we point out that Suzuki and Yokoyama have studied closely related principles that they call pseudo Ramsey theorems. The superscripts $\varepsilon$ and $\Gamma$ refer to the proof-theoretic ordinals of~$\mathsf{ACA}_0$ and $\mathsf{ATR}_0$.

Pseudo $\Pi^1_1$-comprehension may be seen as a relativization of the parameter-free $\Pi^1_1$-compre\-hension principle~$\Pi^1_1\textsf{-CA}^-$. We note that the proof-theoretic ordinal of the theory $\mathsf{ACA}_0+\Pi^1_1\textsf{-CA}^-$ is the Bachmann-Howard ordinal $\vartheta(\varepsilon_{\Omega+1})$, which can be described via a single collapsing function~$\vartheta$, while the proof-theoretic ordinal of the theory $\Pi^1_1\textsf{-CA}_0$ relies on a hierarchy of $\omega$-many collapsing functions (see~\cite{howard-ID1,takeuti67} and the presentation in~\cite{pohlers-proof-theory,pohlers98}). Thus the restriction on parameters weakens the principle considerably. For $\Pi^1_1\textsf{-CA}_0^\varepsilon$ and $\Pi^1_1\textsf{-CA}_0^\Gamma$ we get a similar picture from the hierarchy of Suzuki and Yokoyama and from the~equi\-valences with well-ordering principles that are explained below. In this sense, the following improves the bound from~\cite{montalban-fraisse} significantly.

\begin{theorem}\label{thm:Fraisse-Pi11star}
Fra\"iss\'e's conjecture is provable in~$\Pi^1_1\textsf{-}\mathsf{CA}_0^\Gamma$.
\end{theorem}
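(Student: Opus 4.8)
The plan is to relativise Montalb\'an's proof that $\Pi^1_1\textsf{-CA}_0$ implies Fra\"iss\'e's conjecture, while keeping careful track of which sets serve as parameters for the instances of $\Pi^1_1$-comprehension that the argument invokes. We reason in $\Pi^1_1\textsf{-CA}_0^\Gamma$ and let $\langle L_i\rangle_{i\in\mathbb N}$ be an arbitrary sequence of countable linear orders, coded by a single set $X\subseteq\mathbb N$. Invoking $\Pi^1_1\textsf{-CA}^\Gamma$, we fix an $\omega$-model $\mathcal M\ni X$ with $\mathcal M\vDash\mathsf{ATR}_0$ and $\mathcal M\vDash\Pi^1_1\textsf{-CA}_0(X)$. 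The assertion that there are $i<j$ and an embedding $L_i\hookrightarrow L_j$ is $\Sigma^1_1$ in $X$ and therefore absolute between $\mathcal M$ and the ambient universe: a witness in $\mathcal M$ really is a witness, since $\mathcal M$ is an $\omega$-model and the matrix is arithmetical. So it suffices to prove Fra\"iss\'e's conjecture for $\langle L_i\rangle$ inside $\mathcal M$; equivalently, it suffices to derive Fra\"iss\'e's conjecture for $X$ from $\mathsf{ATR}_0$ together with the instances of $\Pi^1_1$-comprehension whose only set parameter is $X$ (note that the hyperjump of $X$, and $\mathsf{ATR}_0$ relative to it, are then available in $\mathcal M$).

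The next step is to replay Montalb\'an's argument in this weaker setting. Recall that it establishes the stronger statement that the countable linear orders form a better-quasi-order under embeddability, using the structure theory of Hausdorff and Laver --- ordinal Hausdorff ranks, and closure of bqo under ordinal-indexed and $\mathbb Q$-indexed sums --- together with the Nash-Williams machinery of minimal bad arrays. The constructions that merely iterate arithmetical comprehension along a well-order (the Hausdorff rank function, the transfinite recursions decomposing scattered and $\sigma$-scattered orders) go through in $\mathcal M$ because $\mathcal M\vDash\mathsf{ATR}_0$; and the few steps that need genuine $\Pi^1_1$-comprehension applied to an $X$-recursive object are covered by $\mathcal M\vDash\Pi^1_1\textsf{-CA}_0(X)$. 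The single genuinely impredicative step that resists this treatment is the minimal bad array lemma: the standard proof extracts a minimal bad array from a hypothetical bad array $B$ by a recursion of $\Pi^1_1(B)$ complexity, but here $B$ is an arbitrary member of $\mathcal M$, not something reducible to $X$, and $\mathsf{ATR}_0$ does not furnish the required $\Pi^1_1(B)$ set.

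To get around this I would replace the minimal bad array lemma by a Ramsey-theoretic development of exactly the fragment of bqo theory that Montalb\'an's proof consumes, along the lines of the Galvin--Prikry and Nash-Williams theorems. The combinatorial core of this replacement is the cofinite $\Delta^0_2$-Ramsey theorem announced in the abstract: a form of Ramsey's theorem for $[\mathbb N]^{\mathbb N}$ with $\Delta^0_2$ colourings in which the homogeneous set is only required to be homogeneous on a cofinite tail. One then checks that this version still suffices to run the badness and preservation arguments --- in particular, to prove that bqo is closed under ordinal-indexed and $\mathbb Q$-indexed sums without invoking minimal bad arrays. The decisive advantage of the cofinite weakening is that, unlike the full $\Delta^0_2$-Ramsey theorem, it is available in $\mathcal M$; establishing this --- either directly from $\mathsf{ATR}_0+\Pi^1_1\textsf{-CA}_0(X)$ for the colourings that actually occur, or via the same passage to a pseudo $\beta$-model that $\Pi^1_1\textsf{-CA}^\Gamma$ provides --- is where the bulk of the new work lies, and is the reason the weakened ``cofinite'' formulation is needed in place of the classical one.

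The main obstacle is thus twofold. First, one must check that Montalb\'an's proof genuinely can be rearranged so that every surviving use of $\Pi^1_1$-comprehension takes $X$ as its sole parameter and every remaining non-arithmetical step is either an $\mathsf{ATR}_0$-recursion or an application of the cofinite $\Delta^0_2$-Ramsey theorem; the preservation theorems for transfinite sums are the delicate point here. Second, one must prove the cofinite $\Delta^0_2$-Ramsey theorem in $\Pi^1_1\textsf{-CA}_0^\Gamma$ (or verify that it holds in every model $\mathcal M$ as above), which is the combinatorial heart of the argument. Granting both, running the rearranged proof inside $\mathcal M$ yields Fra\"iss\'e's conjecture for $\langle L_i\rangle$, and since the sequence was arbitrary we are done.
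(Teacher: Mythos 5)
Your proposal correctly identifies the cofinite $\Delta^0_2$-Ramsey theorem as the new combinatorial ingredient and correctly sees why the ``cofinite'' weakening is what a pseudo $\beta$-model can deliver (only the subsets of the homogeneous set that actually lie in the model are controlled, and the cofinite ones are there automatically). But the way you propose to deploy it contains a genuine gap. You plan to relativise Montalb\'an's \emph{entire} structural proof inside the model $\mathcal M$ --- Hausdorff ranks, closure of bqo under ordinal- and $\mathbb Q$-indexed sums, minimal bad arrays --- and to ``check that the cofinite version still suffices to run the badness and preservation arguments.'' That check is precisely the hard part, and it would fail as stated: homogeneity only on cofinite subsets cannot be iterated (a second application of the Ramsey principle inside a previously obtained homogeneous set is not justified, since the relevant subsets are no longer cofinite in $\mathbb N$), so already the passage from two colours to three breaks down, let alone the full bqo preservation machinery. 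Your worry about which sets serve as parameters for $\Pi^1_1$-comprehension, and about the minimal bad array lemma for an arbitrary bad array $B\in\mathcal M$, is a symptom of having taken on far more of Montalb\'an's proof than is necessary.

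The paper avoids all of this by using Montalb\'an's work as a black box in a much stronger form: he proved, over $\mathsf{ATR}_0$ alone, that Fra\"iss\'e's conjecture follows from the \emph{single} $\Pi^1_2$-statement that the three-element antichain is $\Delta^0_2\textsf{-bqo}$. Since $\Pi^1_1\textsf{-CA}_0^\Gamma$ contains $\mathsf{ATR}_0$ outright, nothing of the structure theory needs to be relativised or re-examined; one only has to prove that one statement. This is done in two steps: the cofinite $\Delta^0_2$-Ramsey theorem is derived in $\Pi^1_1\textsf{-CA}_0^\Gamma$ via the pseudo $\beta$-model formulation (essentially Simpson's Lemma~VI.6.2 plus the open Ramsey theorem inside the outer model), and then a short but delicate argument --- a case split on whether the colour $2$ persists on cofinite subsets, followed by a Marcone-style parity contradiction --- extracts the three-element result from the two-colour cofinite Ramsey theorem, circumventing exactly the non-iterability obstacle your proposal does not address. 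You should restructure your argument around this reduction rather than attempting to rework the bqo theory wholesale.
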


In Section~\ref{sect:cof-RT} we will actually prove a somewhat stronger result that involves a new cofinite version of the $\Delta^0_2$-Ramsey theorem (see Definition~\ref{def:cof-RT}). Our proof is a relatively straightforward modification of Montalb\'an's, though we will face one new challenge (see Remark~\ref{rmk:cof-RT-colours}).

A forthcoming companion paper will characterize $\Pi^1_1\textsf{-CA}^\Gamma$ by a well-ordering principle. Such a principle asserts that $F(X)$ is well-founded for any well order~$X$, where $F$ is a computable transformation of linear orders. The best-known example is probably the transformation of~$X$ into the linear order with underlying set
\begin{equation*}
\omega(X)=\left\{\left.\omega^{x(0)}+\ldots+\omega^{x(n-1)}\,\right|\,x(i)\in X\text{ and }x(n-1)\leq_X\ldots\leq_X x(0)\right\},
\end{equation*}
ordered by lexicographic comparisons of the exponents. Girard~\cite{girard87} and Hirst~\cite{hirst94} have shown that, over the base theory~$\mathsf{RCA}_0$, arithmetical comprehension is equivalent to the principle that $\omega(X)$ is well-founded for any well order~$X$. The literature now includes well-ordering principles that correspond to infinite iterations of the Turing jump~\cite{marcone-montalban}, arithmetical transfinite recursion \cite{rathjen-weiermann-atr} (originally an unpublished result of H.~Friedman), $\omega$-models of arithmetical transfinite recursion~\cite{rathjen-atr}, $\omega$-models of transfinite induction~\cite{rathjen-model-bi} and $\omega$-models of \mbox{$\Pi^1_1$-}comprehension without~\cite{thomson-rathjen-Pi-1-1} and with~\cite{thomson-thesis} transfinite induction. The principles of $\Pi^1_1$-comprehension and $\Pi^1_1$-transfinite recursion (which have complexity~$\Pi^1_3$) have been characterized by well-ordering principles of higher type~\cite{freund-equivalence,freund-computable,FR_Pi11-recursion}.

In the following section, we will see that $\Pi^1_1\textsf{-CA}^\varepsilon$ is equivalent to the principle that every set lies in an $\omega$-model of transfinite induction. This is one of the principles that were referenced in the previous paragraph. Rathjen and Valencia-Vizca\'ino~\cite{rathjen-model-bi} have shown that it is equivalent to the statement that a certain order~$\vartheta_X=\vartheta(\varepsilon_{\Omega+X})$ is well-founded for any well order~$X$. The order $\vartheta(\varepsilon_{\Omega+X})$ may be seen as a relativization of the Bachmann-Howard ordinal. It contains elements $\varepsilon_{\Omega+\alpha}$ for~$\alpha\in X$ that represent `large' $\varepsilon$-numbers (i.\,e., fixed points of ordinal exponentiation) and comes with a so-called collapsing function $\vartheta$ that maps $\vartheta(\varepsilon_{\Omega+X})$ into a proper initial segment of itself. This function cannot be an embedding, but it is `almost' order preserving, which forces~$\Omega$ to be large (see~\cite{rathjen-model-bi} for details).

The result of Rathjen and Valencia-Vizca\'ino~\cite{rathjen-model-bi} entails that $\Pi^1_1\textsf{-CA}^\varepsilon$ is equivalent to the principle that $X\mapsto\vartheta(\varepsilon_{\Omega+X})$ preserves well orders (see Corollary~\ref{cor:ALPP-WO-Princ}). As far as the author is aware, this is the first time an explicit connection is made between well-ordering principles and Towsner's partial impredicativity. In the aforementioned companion paper, we prove an analogous characterization of $\Pi^1_1\textsf{-CA}^\Gamma$ in terms of orders $\vartheta(\Gamma_{\Omega+X})$, which contain terms~$\Gamma_{\Omega+\alpha}$ that represent fixed points of the Veblen function. This characterization is of interest in connection with Fra\"iss\'e's conjecture because it provides a more combinatorial upper bound. At the same time, its proof involves a rather technical ordinal analysis, so that the companion paper addresses a somewhat different readership. Let us note that $\Pi^1_1\textsf{-CA}^\Gamma$ does not seem to have a characterization via $\omega$-models of transfinite induction (see the paragraph after the proof of Proposition~\ref{prop:Pi11eps-BI}). For this reason, we need a different approach than in~\cite{rathjen-model-bi} to characterize $\Pi^1_1\textsf{-CA}^\Gamma$ by a well-ordering principle.

\subsection*{Acknowledgements} I am very grateful to Davide Manca, Yudai Suzuki and Keita Yokoyama for their helpful feedback on a first version of this paper.

\section{Pseudo $\Pi^1_1$-comprehension}\label{sect:pseudo-Pi11}

In this section, we relate pseudo $\Pi^1_1$-comprehension to the pseudo \mbox{$\beta$-}model reflection of Suzuki and Yokoyama~\cite{suzuki-yokoyama},  to the leftmost path principles of Towsner~\cite{partial-impred} and to reflection for $\omega$-models of transfinite induction, which Rathjen and Valencia-Vizca\'ino~\cite{rathjen-model-bi} have studied in connection with well-ordering principles.

To explain the original approach of Towsner~\cite{partial-impred}, we first recall that $\Pi^1_1$-compre\-hension is equivalent to the statement that any ill-founded tree has a left-most branch~\cite{marcone-bad-sequence}. The arithmetical leftmost path principle~$\mathsf{ALPP}$ asserts that any ill-founded tree~$T$ has a branch~$f$ such that no branch of~$T$ is to the left of~$f$ and arithmetically reducible to~$T\oplus f$ (see Section~4 of~\cite{suzuki-yokoyama} for more details). For the transfinite leftmost path principle~$\mathsf{TLPP}$, one admits all paths that are $\Sigma_\alpha$ in~$T\oplus f$ for some well order~$\alpha$.

Suzuki and Yokoyama~\cite{suzuki-yokoyama} base their approach on principles $\beta^1_0\mathsf{RFN}(n;\varphi)$, which assert that any $X$ admits coded $\omega$-models~$\mathcal M_0\in\ldots\in\mathcal M_n$ with $X\in\mathcal M_0$ and $\mathcal M_n\vDash\mathsf{ACA}_0+\varphi$ as well as $\mathcal M_{i+1}\vDash\text{``$\mathcal M_i$ is a $\beta$-model"}$ for each~$i<n$. They show that any $\Pi^1_2$-theorem of $\Pi^1_1\textsf{-CA}_0$ is provable in~$\mathsf{ACA}_0+\beta^1_0\mathsf{RFN}(n;\top)$ for some~$n\in\mathbb N$. Furthermore, they show that we have
\begin{equation*}
\mathsf{ALPP}=\beta^1_0\mathsf{RFN}(1;\top)<\mathsf{TLPP}<\beta^1_0\mathsf{RFN}(1;\mathsf{ATR})<\beta^1_0\mathsf{RFN}(2;\top),
\end{equation*}
where $\varphi=\psi$ and $\varphi<\psi$ denote $\mathsf{ACA}_0\vdash\varphi\leftrightarrow\psi$ and $\mathsf{ACA}_0+\psi\vdash\mathsf{Con}(\mathsf{ACA}_0+\varphi)$. In the same notation, the following shows $\Pi^1_1\textsf{-CA}^\Gamma=\beta^1_0\mathsf{RFN}(1;\mathsf{ATR})$. The corollary below formulates an analogous result for $\Pi^1_1\textsf{-CA}^\varepsilon$. We note that pseudo $\Pi^1_1$-compre\-hension essentially coincides with the hyperjump principle from Lemma~3.8 of~\cite{suzuki-yokoyama}, so that the following can be seen as a special case of this lemma.

\begin{proposition}[$\mathsf{ACA}_0$]\label{prop:Pi11min-beta}
The principle $\Pi^1_1\text{-}\mathsf{CA}^\Gamma$ holds precisely if any~$X$ admits coded $\omega$-models~$\mathcal M_0\in\mathcal M_1$ with~$\mathcal M_1\vDash\mathsf{ATR}_0+\text{``$\mathcal M_0\ni X$ is a coded $\beta$-model"}$.
\end{proposition}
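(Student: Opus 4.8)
The plan is to prove both implications by exhibiting explicit witnessing models, the recurring point being that the $\beta$-model clause lets us pin down $\Pi^1_1$-truth (alternatively one could deduce the statement from the hyperjump principle of~\cite{suzuki-yokoyama} mentioned above). For the direction from right to left, suppose $X$ is given and fix coded $\omega$-models $\mathcal M_0\in\mathcal M_1$ with $\mathcal M_1\vDash\mathsf{ATR}_0+\text{``$\mathcal M_0\ni X$ is a coded $\beta$-model"}$. I claim that $\mathcal M_1$ \emph{itself} witnesses $\Pi^1_1\text{-}\mathsf{CA}^\Gamma$ for~$X$; this is the crucial move, since $\mathcal M_0$ alone need not work ($\beta$-models in general fail $\Pi^1_1\text{-}\mathsf{CA}_0$ — the $X$-hyperarithmetical sets form a $\beta$-model but omit~$\mathcal O^X$). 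Now $X\in\mathcal M_1$ because $X\in\mathcal M_0\in\mathcal M_1$, and $\mathcal M_1\vDash\mathsf{ATR}_0$ by hypothesis, so it remains to check $\mathcal M_1\vDash\Pi^1_1\text{-}\mathsf{CA}_0(X)$. Given a $\Pi^1_1$-formula $\varphi(x,X)$ with no set parameters besides~$X$, I would apply the $\beta$-model clause inside~$\mathcal M_1$ to the $\Sigma^1_1$-formula $\neg\varphi$ and the parameter $X\in\mathcal M_0$, getting $\mathcal M_0\vDash\varphi(x,X)\leftrightarrow\mathcal M_1\vDash\varphi(x,X)$ for every number~$x$; hence $\{x\,|\,\mathcal M_1\vDash\varphi(x,X)\}=\{x\,|\,\mathcal M_0\vDash\varphi(x,X)\}$. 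For the fixed formula~$\varphi$ the right-hand side is arithmetically definable from the code of~$\mathcal M_0$, which lies in $\mathcal M_1\vDash\mathsf{ACA}_0$, so it belongs to~$\mathcal M_1$, as needed.

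For the converse I would start from $\Pi^1_1\text{-}\mathsf{CA}^\Gamma$: given~$X$, fix a coded $\omega$-model $\mathcal M\ni X$ with $\mathcal M\vDash\mathsf{ATR}_0+\Pi^1_1\text{-}\mathsf{CA}_0(X)$. Applying $\Pi^1_1\text{-}\mathsf{CA}_0(X)$ to the $\Pi^1_1$-formula saying that the $e$-th $X$-recursive linear order is well founded shows that $\mathcal M$ contains a set which it regards as the hyperjump~$\mathcal O^X$, so $\mathcal M\vDash\mathsf{ATR}_0+\text{``$\mathcal O^X$ exists"}$. I would then put $\mathcal M_1:=\mathcal M$ and, working inside~$\mathcal M$, let $\mathcal M_0$ be the coded $\omega$-model whose sets are those hyperarithmetical in~$X$, built from~$\mathcal O^X$ via the associated $H$-hierarchy; this construction goes through in $\mathsf{ATR}_0+\text{``$\mathcal O^X$ exists"}$, and the code of~$\mathcal M_0$ is recursive in~$\mathcal O^X$, so $\mathcal M_0\in\mathcal M_1$. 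Finally $X\in\mathcal M_0$ is clear, and $\mathcal M_1\vDash\text{``$\mathcal M_0$ is a $\beta$-model"}$ is the standard fact that, provably in $\mathsf{ATR}_0+\text{``$\mathcal O^X$ exists"}$, the $X$-hyperarithmetical sets are $\Sigma^1_1$-correct: by a relativised low basis (Gandy basis) argument, any nonempty $\Pi^0_1$-class with a parameter recursive in~$H^X_a$ has a member recursive in~$H^X_{a+1}$, where $a+1$ again lies in~$\mathcal O^X$.

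The main obstacle, as flagged above, is to realise that in the first direction the witness must be~$\mathcal M_1$ and not~$\mathcal M_0$. The mechanism is that the $\beta$-submodel~$\mathcal M_0$ inside~$\mathcal M_1$ forces the $\Pi^1_1(X)$-truth set to coincide with $\mathcal M_0$'s version of it, which then collapses to a set arithmetical in a code that~$\mathcal M_1$ already contains; this is exactly what lets the weak closure of~$\mathcal M_1$ (only~$\mathsf{ATR}_0$, far short of~$\Pi^1_1\text{-}\mathsf{CA}^-$) nevertheless deliver the relevant instances of $\Pi^1_1(X)$-comprehension. Everything else is routine bookkeeping: all comparisons of satisfaction in~$\mathcal M_0$ and in~$\mathcal M_1$ are arithmetical in the codes of the models, so the whole argument is carried out in the base theory~$\mathsf{ACA}_0$.
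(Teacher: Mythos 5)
Your right-to-left direction is correct and coincides with the paper's argument: take $\mathcal M_1$ itself as the witness, use the internal $\beta$-model clause to identify $\{x\,|\,\mathcal M_1\vDash\varphi(x,X)\}$ with $\{x\,|\,\mathcal M_0\vDash\varphi(x,X)\}$, and observe that the latter is arithmetical in the code of $\mathcal M_0$ and hence lies in $\mathcal M_1\vDash\mathsf{ACA}_0$. The high-level plan for the converse is also the paper's: inside a model of $\mathsf{ATR}_0+\Pi^1_1\text{-}\mathsf{CA}_0(X)$, obtain the hyperjump of $X$ and from it a coded $\beta$-model containing $X$, then set $\mathcal M_1:=\mathcal M$.

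However, your concrete construction of $\mathcal M_0$ in that direction is wrong. The $\omega$-model of the $X$-hyperarithmetical sets is \emph{not} a $\beta$-model: there are trees recursive in $X$ (on $\mathbb N^{<\mathbb N}$, i.e.\ in Baire space) that are ill-founded but have no $X$-hyperarithmetical path, so $\mathrm{HYP}(X)$ wrongly judges such trees to be well-founded and fails $\Pi^1_1$-correctness. Your justification via a low/Gandy basis argument does not repair this: the low basis theorem applies to $\Pi^0_1$ classes in Cantor space (bounded trees), whereas witnesses to $\Sigma^1_1$ statements are branches of unbounded trees in Baire space, to which it does not apply; and the Gandy basis theorem yields witnesses that are low for $\omega_1^{\mathrm{CK}}$, not hyperarithmetical ones (indeed no basis theorem can place them in $\mathrm{HYP}(X)$). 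The same error appears in your parenthetical aside, which cites $\mathrm{HYP}(X)$ as an example of a $\beta$-model. The paper sidesteps all of this by citing Lemma~VII.2.9 of Simpson, whose proof builds the coded $\beta$-model from $\mathrm{HJ}(X)$ by a genuinely different closure construction (iteratively adjoining Gandy-low witnesses so that the whole code stays recursive in the hyperjump), rather than by taking the hyperarithmetical sets. Replacing your construction of $\mathcal M_0$ with an appeal to that lemma (or reproducing its proof) fixes the argument; everything else you wrote stands.
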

\begin{proof}
The result is a straightforward variant of the classical equivalence between $\Pi^1_1$-comprehension and the statement that any set is contained in a coded~$\beta$-model. To make this explicit, we first assume $\Pi^1_1\text{-}\mathsf{CA}^\Gamma$. Any given~$X$ is then contained in an $\omega$-model $\mathcal M\vDash\mathsf{ATR}_0+\Pi^1_1\textsf{-CA}(X)$. As the hyperjump of $X$ is $\Sigma^1_1$-definable without further set parameters (see Definition~VII.1.5 of~\cite{simpson09}), we get
\begin{equation*}
\mathcal M\vDash\text{``the hyperjump of~$X$ exists"}.
\end{equation*}
Due to Lemma~VII.2.9 of~\cite{simpson09}, this entails
\begin{equation*}
\mathcal M\vDash\text{``there is a coded $\beta$-model that contains~$X$"}.
\end{equation*}
We can take~$\mathcal M_1:=\mathcal M$ to validate the statement from the proposition. One may also omit the detour via hyperjumps and construct the desired $\beta$-model directly by $\Pi^1_1$-comprehension with $X$ as the only set parameter, following the proof of the cited Lemma~VII.2.9. For the converse direction, one can argue similarly via Lemmas~VII.1.6 and~VII.1.9 of~\cite{simpson09}, though a direct argument is much simpler here. Indeed, for $X\in\mathcal M_0\in\mathcal M_1$ as in the proposition, we get
\begin{equation*}
\{x\,|\,\mathcal M_1\vDash\varphi(x,X)\}=\{x\,|\,\mathcal M_0\vDash\varphi(x,X)\}\in\mathcal M_1
\end{equation*}
when~$\varphi$ is~$\Pi^1_1$, so that $\mathcal M_1$ has the property that is required by~$\Pi^1_1\text{-}\mathsf{CA}^\Gamma$.
\end{proof}

We also record the following version of the result.

\begin{corollary}[$\mathsf{ACA}_0$]\label{cor:alpp}
The principles $\Pi^1_1\text{-}\mathsf{CA}^\varepsilon$ and $\mathsf{ALPP}$ are equivalent.
\end{corollary}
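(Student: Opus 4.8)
The plan is to follow the proof of Proposition~\ref{prop:Pi11min-beta} with $\mathsf{ACA}_0$ in place of $\mathsf{ATR}_0$, and then to invoke the identity $\mathsf{ALPP}=\beta^1_0\mathsf{RFN}(1;\top)$ of Suzuki and Yokoyama. In detail, I would first establish the following $\mathsf{ACA}_0$-analogue of Proposition~\ref{prop:Pi11min-beta}: the principle $\Pi^1_1\text{-}\mathsf{CA}^\varepsilon$ holds precisely if any~$X$ admits coded $\omega$-models $\mathcal M_0\in\mathcal M_1$ with $\mathcal M_1\vDash\mathsf{ACA}_0+\text{``$\mathcal M_0\ni X$ is a coded $\beta$-model''}$. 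This last statement is just $\beta^1_0\mathsf{RFN}(1;\top)$ in the notation recalled in Section~\ref{sect:pseudo-Pi11}: the condition ``$X\in\mathcal M_0$'', which appears externally in $\beta^1_0\mathsf{RFN}(1;\top)$, is arithmetical in the codes of $\mathcal M_0$ and~$X$ and is therefore also decided correctly inside any $\mathcal M_1\vDash\mathsf{ACA}_0$. Since Suzuki and Yokoyama have shown $\mathsf{ALPP}=\beta^1_0\mathsf{RFN}(1;\top)$ (see Section~4 of~\cite{suzuki-yokoyama}), this yields the corollary.

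For the equivalence with the $\beta$-model statement I would argue exactly as in Proposition~\ref{prop:Pi11min-beta}. Assume $\Pi^1_1\text{-}\mathsf{CA}^\varepsilon$ and fix~$X$, so that we have $\mathcal M\ni X$ with $\mathcal M\vDash\mathsf{ACA}_0+\Pi^1_1\textsf{-CA}(X)$. Inside~$\mathcal M$ one builds a coded $\beta$-model containing~$X$ by $\Pi^1_1$-comprehension with~$X$ as the only set parameter, following the proof of Lemma~VII.2.9 of~\cite{simpson09} as in the proof of Proposition~\ref{prop:Pi11min-beta}; then $\mathcal M_1:=\mathcal M$ together with this $\beta$-model $\mathcal M_0$ does the job. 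For the converse, given $X\in\mathcal M_0\in\mathcal M_1$ with $\mathcal M_1\vDash\mathsf{ACA}_0+\text{``$\mathcal M_0$ is a coded $\beta$-model''}$, I would set $\mathcal M:=\mathcal M_1$. Since $\mathcal M_1$ regards $\mathcal M_0$ as $\Sigma^1_1$-absolute, and $\Pi^1_1$-formulas are negations of $\Sigma^1_1$-ones, we get $\mathcal M_1\vDash(\varphi(x,X)\leftrightarrow\text{``$\mathcal M_0\vDash\varphi(x,X)$''})$ for every $\Pi^1_1$-formula~$\varphi$; as ``$\mathcal M_0\vDash\varphi(x,X)$'' is arithmetical in the code of~$\mathcal M_0$ and in~$X$, the set $\{x\,|\,\mathcal M_1\vDash\varphi(x,X)\}$ lies in~$\mathcal M_1$, so that $\mathcal M_1$ has the property required by $\Pi^1_1\text{-}\mathsf{CA}^\varepsilon$.

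The one place where the argument is not a verbatim copy of the proof of Proposition~\ref{prop:Pi11min-beta}, and which I expect to need the most care, is the following: the $\omega$-models provided by $\Pi^1_1\text{-}\mathsf{CA}^\varepsilon$ satisfy only~$\mathsf{ACA}_0$ rather than~$\mathsf{ATR}_0$, so in the forward direction one cannot appeal to~$\mathsf{ATR}_0$ inside~$\mathcal M$ but must instead produce the coded $\beta$-model over~$X$ using only $\Pi^1_1$-comprehension with~$X$ as the sole parameter --- that is, one must check that Simpson's construction of such a $\beta$-model from the hyperjump of~$X$ already goes through over~$\mathsf{ACA}_0$. Granting this, the rest is the routine bookkeeping indicated above, together with the reference to~\cite{suzuki-yokoyama} for $\mathsf{ALPP}=\beta^1_0\mathsf{RFN}(1;\top)$.
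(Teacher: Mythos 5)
Your proposal matches the paper's own proof: the paper likewise derives the corollary by rerunning the proof of Proposition~\ref{prop:Pi11min-beta} with $\mathsf{ACA}_0$ in place of $\mathsf{ATR}_0$ to identify $\Pi^1_1\text{-}\mathsf{CA}^\varepsilon$ with $\beta^1_0\mathsf{RFN}(1;\top)$, and then cites Theorem~4.14 of~\cite{suzuki-yokoyama} for $\mathsf{ALPP}=\beta^1_0\mathsf{RFN}(1;\top)$. The one point you flag as needing care is indeed unproblematic, since Lemma~VII.2.9 of~\cite{simpson09} (hyperjump to coded $\beta$-model) is already proved over $\mathsf{ACA}_0$.
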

\begin{proof}
We take up the notation from the paragraph before Proposition~\ref{prop:Pi11min-beta}. By~the proof of the latter (with $\mathsf{ACA}_0$ at the place of~$\mathsf{ATR}_0$ in each of the equivalent~statements), our principle $\Pi^1_1\text{-}\mathsf{CA}^\varepsilon$ is equivalent to~$\beta^1_0\mathsf{RFN}(1;\top)$. The latter is equivalent to~$\mathsf{ALPP}$ by Theorem~4.14 of~\cite{suzuki-yokoyama}.
\end{proof}

Towsner~\cite{partial-impred} has established several results that connect his relative leftmost path principles and transfinite induction. These do not seem to entail the following, which was independently proved by Yudai Suzuki (personal communication).

\begin{proposition}[$\mathsf{ACA}_0$]\label{prop:Pi11eps-BI}
The principle $\Pi^1_1\text{-}\mathsf{CA}^\varepsilon$ holds precisely if every set is contained in a coded $\omega$-model of $\mathsf{ACA}_0$ that satisfies the principle~$\Pi^1_\infty\text{-}\mathsf{TI}$ of transfinite induction for all formulas of second-order arithmetic.
\end{proposition}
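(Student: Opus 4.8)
The plan is to prove the two implications separately, over $\mathsf{ACA}_0$.

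\emph{From $\Pi^1_1\text{-}\mathsf{CA}^\varepsilon$ to the $\omega$-model principle.} Given~$X$, the principle~$\Pi^1_1\text{-}\mathsf{CA}^\varepsilon$ supplies a coded $\omega$-model $\mathcal M\ni X$ with $\mathcal M\vDash\mathsf{ACA}_0$ and $\mathcal M\vDash\Pi^1_1\text{-}\mathsf{CA}_0(X)$. Arguing inside~$\mathcal M$ as in the proof of Proposition~\ref{prop:Pi11min-beta}, now with $\mathsf{ACA}_0$ in place of $\mathsf{ATR}_0$ and using that the hyperjump of~$X$ is $\Sigma^1_1$ in~$X$ alone, one obtains a code for a coded $\beta$-model~$\mathcal N$ with $X\in\mathcal N$ and $\mathcal M\vDash\text{``$\mathcal N$ is a coded $\beta$-model"}$; recall that coded $\beta$-models satisfy~$\mathsf{ACA}_0$. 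It remains to check that $\mathcal N$ genuinely satisfies $\Pi^1_\infty\text{-}\mathsf{TI}$. For a fixed instance $\mathsf{TI}(\prec,\psi)$ with $\prec$ and all parameters in~$\mathcal N$, one reasons inside~$\mathcal M$: if $\mathcal N\vDash\text{``$\prec$ is a well-order"}$, then $\mathcal M$ agrees, since it regards~$\mathcal N$ as a coded $\beta$-model; the set $S=\{n\,|\,\mathcal N\vDash\psi(n)\}$ is arithmetical in the code of~$\mathcal N$, hence a set of~$\mathcal M$; and if $S$ is $\prec$-progressive, then $S=\mathbb N$ by the arithmetical transfinite induction provable in~$\mathsf{ACA}_0$. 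Thus $\mathcal M\vDash\text{``$\mathcal N\vDash\mathsf{TI}(\prec,\psi)$"}$; since this, and the statements $X\in\mathcal N$ and $\mathcal N\vDash\mathsf{ACA}_0$, are arithmetical in the code of~$\mathcal N\in\mathcal M$, they transfer to the background universe. Hence $\mathcal N$ is as required. (A direct construction of~$\mathcal N$ by parameter-restricted $\Pi^1_1$-comprehension would serve equally well.)

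\emph{From the $\omega$-model principle to $\Pi^1_1\text{-}\mathsf{CA}^\varepsilon$.} By Corollary~\ref{cor:alpp} it suffices to derive the arithmetical leftmost path principle~$\mathsf{ALPP}$. Given an ill-founded tree~$T$ with a branch~$b$, the idea is to apply the hypothesis to obtain a coded $\omega$-model $\mathcal N\vDash\mathsf{ACA}_0+\Pi^1_\infty\text{-}\mathsf{TI}$ containing~$T$ and a branch of~$T$, and then, working inside~$\mathcal N$, to run a leftmost-path construction: starting from~$b$, one repeatedly passes from the current branch~$f$ to a branch of~$T$ that is arithmetical in $T\oplus f$ and strictly to the left of~$f$, as long as such a branch exists. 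Transfinite induction inside~$\mathcal N$ should show that the process terminates at some $f\in\mathcal N$ for which no branch of~$T$ in~$\mathcal N$ is arithmetical in $T\oplus f$ and strictly to the left of~$f$; since every set arithmetical in $T\oplus f$ already lies in~$\mathcal N$, the same holds in the background universe, which is the conclusion of~$\mathsf{ALPP}$.

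The termination argument is where I expect the main difficulty. A model of $\mathsf{ACA}_0+\Pi^1_\infty\text{-}\mathsf{TI}$ need not satisfy $\mathsf{ATR}_0$, has neither the $\omega$-th Turing jump nor arbitrary transfinite recursions, and---crucially---a single such model containing~$X$ need not satisfy $\Pi^1_1\text{-}\mathsf{CA}_0(X)$. So the construction and the well-order along which transfinite induction is invoked must be set up so as to use only transfinite induction for a (possibly quantificationally complex) formula together with arithmetical comprehension. I anticipate that this forces one first to extract $\mathsf{ATR}_0$ from the hypothesis---which also gives the $\mathsf{ATR}_0$-containment announced after Definition~\ref{def:Pi11X}---and then to apply the hypothesis iteratively, say once more to the code of an already-obtained model, so that the $\Pi^1_\infty\text{-}\mathsf{TI}$ of the outer model bounds the construction. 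Pinning down the right well-order and the right induction formula is the real work of this direction.
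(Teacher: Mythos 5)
Your forward direction is essentially the paper's argument: construct a coded $\beta$-model $\mathcal N\ni X$ inside a model $\mathcal M$ of $\mathsf{ACA}_0+\Pi^1_1\text{-}\mathsf{CA}_0(X)$, verify $\Pi^1_\infty\text{-}\mathsf{TI}$ in $\mathcal N$ by pushing well-foundedness up to $\mathcal M$ and applying set induction there (you are re-proving Simpson's Lemma~VII.2.15, which the paper simply cites), and transfer the result to the background universe via absoluteness of the relevant statements about the code of~$\mathcal N$. That half is fine.

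The converse is where there is a genuine gap, and you have in effect flagged it yourself. Reducing to $\mathsf{ALPP}$ via Corollary~\ref{cor:alpp} is legitimate, but you then need to derive $\mathsf{ALPP}$ from the hypothesis, and your plan -- iterate a ``pass to a branch further left'' step inside a single model $\mathcal N\vDash\mathsf{ACA}_0+\Pi^1_\infty\text{-}\mathsf{TI}$ and appeal to transfinite induction for termination -- does not go through as stated: transfinite induction presupposes a well order along which to induct, and the sequence of improvements has no given ordinal bound; producing one is essentially as hard as producing the leftmost path itself (it is what $\Sigma^1_1$ transfinite recursion or the hyperjump would provide). Your fallback of ``applying the hypothesis iteratively'' is left entirely unspecified. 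The missing idea is to avoid the leftmost-path construction altogether: given $X\in\mathcal M\vDash\mathsf{ACA}_0+\Pi^1_\infty\text{-}\mathsf{TI}$, first note that the hypothesis yields $\mathsf{ATR}$ in the background (a true-in-every-cone-of-$\omega$-models $\Pi^1_2$-statement), then use arithmetical recursion along $\mathbb N$ to form the $\omega$-model $\mathcal N$ of all sets definable over $\mathcal M$ with parameters. The proof of Simpson's Lemma~VII.2.17 shows that $\Pi^1_\infty\text{-}\mathsf{TI}$ in $\mathcal M$ forces $\mathcal M$ to be a $\beta$-submodel of $\mathcal N$, so for $\Pi^1_1$ formulas $\varphi$ one gets $\{x\,|\,\mathcal N\vDash\varphi(x,X)\}=\{x\,|\,\mathcal M\vDash\varphi(x,X)\}$, and the latter set is definable over $\mathcal M$ and hence an element of $\mathcal N$. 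Thus $\mathcal N\vDash\Pi^1_1\text{-}\mathsf{CA}_0(X)$ directly, with no termination argument and no detour through $\mathsf{ALPP}$.
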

\begin{proof}
Given $\Pi^1_1\text{-}\mathsf{CA}^\varepsilon$, any $X$ admits $\omega$-models~$\mathcal M_0\in\mathcal M_1$ with
\begin{equation*}
\mathcal M_1\vDash\mathsf{ACA}_0+\text{``$\mathcal M_0\ni X$ is a coded $\beta$-model"},
\end{equation*}
by the proof of Proposition~\ref{prop:Pi11min-beta}. For each~$n\in\mathbb N$, Lemma~VII.2.15 of~\cite{simpson09} yields
\begin{equation*}
\mathcal M_1\vDash\text{``$\Pi^1_n\text{-}\mathsf{TI}$ holds in~$\mathcal M_0$"}.
\end{equation*}
Now the statement $\mathcal M_0\vDash\Pi^1_n\text{-}\mathsf{TI}$ has complexity~$\Sigma^1_1$ (in fact~$\Delta^1_1$), as it asserts that there is a valuation that assigns certain truth values (cf.~Definition~VII.2.1 of~\cite{simpson09}). Since $\Sigma^1_1$-statements are upwards absolute for~$\omega$-models, we learn that $\mathcal M_0\vDash\Pi^1_n\text{-}\mathsf{TI}$ holds `in the real world', as required.

For the converse direction, assume any set is contained in an $\omega$-model of~$\Pi^1_\infty\text{-}\mathsf{TI}$. The latter implies~$\mathsf{ATR}$ (see Corollary~VII.2.19 of~\cite{simpson09}), which is thus available `in the real world' (cf.~the paragraph after Definition~\ref{def:Pi11X}). Given any~$X$, we now consider an $\omega$-model~$\mathcal M\ni X$ that satisfies $\mathsf{ACA}_0+\Pi^1_\infty\text{-}\mathsf{TI}$. Using $\mathsf{ATR}$ (in fact just arithmetical recursion along~$\mathbb N$), we can form the $\omega$-model~$\mathcal N$ that consists of those sets that are definable in~$\mathcal M$ (by any second-order formula with parameters). From the proof of Lemma~VII.2.17 in~\cite{simpson09}, we know that $\mathcal M$ is a $\beta$-submodel of $\mathcal N$ (the point being that $\mathcal M$ and $\mathcal N$ agree on well-foundedness due to~$\mathcal M\vDash\Pi^1_\infty\text{-}\mathsf{TI}$). When~$\varphi$ is $\Pi^1_1$, we thus get
\begin{equation*}
\{x\,|\,\mathcal N\vDash\varphi(x,X)\}=\{x\,|\,\mathcal M\vDash\varphi(x,X)\}\in\mathcal N,
\end{equation*}
as needed to conclude $\mathcal N\vDash\Pi^1_1\textsf{-CA}(X)$.
\end{proof}

Proposition~\ref{prop:Pi11min-beta} remains valid with essentially the same proof when~$\mathsf{ATR}$ is replaced by $\mathsf{ACA}$ on both sides of the equivalence (cf.~the proof of Corollary~\ref{cor:alpp}). In contrast, there seems to be no simple way to modify Proposition~\ref{prop:Pi11eps-BI} in order to characterize $\Pi^1_1\textsf{-CA}^\Gamma$ via $\omega$-models of transfinite induction. This is because these $\omega$-models already satisfy~$\mathsf{ATR}$, so that adding the latter to the base theory has no effect. In the previous proof, the model~$\mathcal N$ does not generally validate~$\mathsf{ATR}$.

As mentioned in the introduction, Rathjen and Valencia-Vizca\'ino~\cite{rathjen-model-bi} have related $\omega$-models of transfinite induction to a well-ordering principle~$X\mapsto\vartheta_X$, where the order $\vartheta_X=\vartheta(\varepsilon_{\omega+X})$ can be seen as a relativization of the Bachmann-Howard ordinal. If we combine their result with Corollary~\ref{cor:alpp} and Proposition~\ref{prop:Pi11eps-BI}, we obtain the following.

\begin{corollary}[$\mathsf{ACA}_0$]\label{cor:ALPP-WO-Princ}
The arithmetical leftmost path principle $\mathsf{ALPP}$ holds precisely if the order~$\vartheta_X$ from~\cite{rathjen-model-bi} is well-founded for every well order~$X$.
\end{corollary}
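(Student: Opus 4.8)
The plan is to derive the corollary by composing three equivalences that are already available, so that no genuinely new argument is needed. Working over $\mathsf{ACA}_0$ throughout, the first step is to invoke Corollary~\ref{cor:alpp} and replace $\mathsf{ALPP}$ by the pseudo $\Pi^1_1$-comprehension axiom $\Pi^1_1\text{-}\mathsf{CA}^\varepsilon$. The second step is to apply Proposition~\ref{prop:Pi11eps-BI}, which rewrites $\Pi^1_1\text{-}\mathsf{CA}^\varepsilon$ as the reflection principle asserting that every set is contained in a coded $\omega$-model of $\mathsf{ACA}_0+\Pi^1_\infty\text{-}\mathsf{TI}$. The third step is to quote the main theorem of Rathjen and Valencia-Vizca\'ino~\cite{rathjen-model-bi}: this reflection principle holds if and only if the order $\vartheta_X=\vartheta(\varepsilon_{\Omega+X})$ is well-founded for every well order~$X$. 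Chaining the three biconditionals then yields the statement of the corollary.

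The one point I would check carefully is that the class of $\omega$-models appearing in Proposition~\ref{prop:Pi11eps-BI} coincides with the one treated in~\cite{rathjen-model-bi}. Concretely, I would verify that an ``$\omega$-model of transfinite induction'' there means a coded $\omega$-model of $\mathsf{ACA}_0$ that additionally satisfies transfinite induction for all formulas of second-order arithmetic --- i.\,e.\ the principle denoted $\Pi^1_\infty\text{-}\mathsf{TI}$ here --- and that both directions of their equivalence go through over a base theory no stronger than $\mathsf{ACA}_0$, so that nothing beyond our standing assumptions is silently used. I would also double-check that the order written $\vartheta_X=\vartheta(\varepsilon_{\omega+X})$ in the introduction and $\vartheta(\varepsilon_{\Omega+X})$ elsewhere is one and the same notion, with $\Omega$ playing the role of an uncountable regular ordinal in the collapsing construction.

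Since each ingredient is cited and the argument is purely modular, I do not anticipate any real obstacle beyond this bookkeeping about definitions and base theories. In particular, I see no shorter route: a direct proof of the equivalence of $\mathsf{ALPP}$ with the well-ordering principle would essentially require redoing both the pseudo $\beta$-model analysis underlying Corollary~\ref{cor:alpp} and Proposition~\ref{prop:Pi11eps-BI} and the ordinal analysis of~\cite{rathjen-model-bi}, so the composed form is clearly the right one to state, with the ordinal-theoretic direction of the characterization being exactly the content of~\cite{rathjen-model-bi} and not reproved here.
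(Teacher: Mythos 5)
Your proposal is correct and is exactly the argument the paper intends: the corollary is obtained by chaining Corollary~\ref{cor:alpp}, Proposition~\ref{prop:Pi11eps-BI}, and the main theorem of~\cite{rathjen-model-bi}, precisely as stated in the paragraph preceding the corollary. Your bookkeeping caveats are apt (the occurrence of $\varepsilon_{\omega+X}$ in Section~2 versus $\varepsilon_{\Omega+X}$ in the introduction is indeed just a typo for the same order).
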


In a companion paper, we will characterize $\Pi^1_1\textsf{-CA}^\Gamma$ in terms of a well-ordering principle~$X\mapsto\vartheta(\Gamma_{\Omega+X})$. As we have not been able to express $\Pi^1_1\textsf{-CA}^\Gamma$ via $\omega$-models of trans\-finite induction, this requires a different approach than in~\cite{rathjen-model-bi}.

\section{The cofinite $\Delta^0_2$-Ramsey theorem}\label{sect:cof-RT}

In this section, we show that Fra\"iss\'e's conjecture is provable in~$\Pi^1_1\textsf{-CA}^\Gamma_0$. We first introduce a new cofinite version of the $\Delta^0_2$-Ramsey theorem. It will be shown that this version is provable in~$\Pi^1_1\textsf{-CA}^\Gamma_0$ and entails that the antichain with three elements is $\Delta^0_2\textsf{-bqo}$ (see the explanation below). The latter implies Fra\"iss\'e's conjecture by work of Montalb\'an~\cite{montalban-fraisse}.

Let us discuss some notation and terminology. The collections of finite and infinite subsets of~$X$ will be denoted by $[X]^{<\omega}$ and $[X]^\omega$, respectively. When we have~$X\subseteq\mathbb N$, we identify these subsets with their increasing enumerations. For a set $Y=\{Y(0),Y(1),\ldots\}\in[\mathbb N]^\omega$ with $Y(0)<Y(1)<\ldots$ we put~$Y[n]=\{Y(i)\,|\,i<n\}$. In the following definition, $Q$ can be any set, though we will be most interested in the case where it comes with a quasi-ordering.

\begin{definition}
We say that $f:[\mathbb N]^{<\omega}\to Q$ is eventually constant if each~$X\in[\mathbb N]^\omega$ admits an~$N\in\mathbb N$ with $f(X[n])=f(X[N])$ for all~$n>N$. When this is the case, we define~$\overline f:[\mathbb N]^\omega\to Q$ by stipulating that $\overline f(X)=f(X[n])$ holds for large~$n$.
\end{definition}

From the viewpoint of reverse mathematics, we point out that $\overline f(X)=q$ is a $\Delta^0_2$-relation. Conversely, any $\Delta^0_2$-definable function~$F:[\mathbb N]^\omega\to Q$ can be written as $F=\overline f$ for some eventually constant~$f$, provably in~$\mathsf{ACA}_0$ (see~\cite[Lemma~3.1]{montalban-fraisse}).

\begin{definition}\label{def:Del02-bqo}
A quasi order~$(Q,\leq_Q)$ is called $\Delta^0_2\textsf{-bqo}$ if each eventually constant function~$f:[\mathbb N]^{<\omega}\to Q$ admits an~$X\in[\mathbb N]^\omega$ with $\overline f(X)\leq_Q\overline f(X\backslash\{\min(X)\})$.
\end{definition}

The notion of $\mathsf{bqo}$ (better-quasi-order) is due to Nash-Williams~\cite{nash-williams-trees,nash-williams-bqo}. His~original definition essentially corresponds to the case where~$\overline f$ is \mbox{$\Delta^0_1$-}definable and hence continuous, in the sense that each~$X\in[\mathbb N]^\omega$ admits an~$N\in\mathbb N$ such that $X[N]=Y[N]$ implies~$\overline f(X)=\overline f(Y)$. Simpson~\cite{simpson-borel-bqos} has shown that one obtains an equivalent definition if one admits all Borel functions at the place of~$\overline f$. The equivalence relies on a rather strong metatheory. The case of $\Delta^0_2$-functions plays a crucial role in Montalb\'an's analysis of Fra\"iss\'e's conjecture.

\begin{theorem}[$\mathsf{ATR}_0$; \cite{montalban-fraisse}]\label{thm:montalban}
If the antichain with three elements is $\Delta^0_2\textsf{-bqo}$, then Fra\"iss\'e's conjecture holds.
\end{theorem}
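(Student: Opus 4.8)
The plan is to follow Montalb\'an's reduction. Working inside $\mathsf{ATR}_0$, one codes countable linear orders by well-founded trees carrying labels from a small finite set, and then transfers the $\Delta^0_2\textsf{-bqo}$ property from that label set up to the whole class of countable linear orders by means of closure theorems of Nash-Williams and Laver type, now phrased for eventually constant functions (equivalently, $\Delta^0_2$-arrays) rather than continuous ones. Note that a $\Delta^0_2\textsf{-bqo}$ is automatically a $\mathsf{wqo}$: given a sequence $q_0,q_1,\ldots$ from $Q$, apply Definition~\ref{def:Del02-bqo} to the eventually constant $f$ with $f(s)=q_{\min s}$, for which $\overline f$ maps $X$ to $q_{X(0)}$. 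Hence it suffices to prove that the coded class of countable linear orders is $\Delta^0_2\textsf{-bqo}$ whenever the three-element antichain is.

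First I would set up the structure theory. By Hausdorff's analysis, available in $\mathsf{ATR}_0$, every countable scattered linear order lies in a hierarchy $\mathcal H_\alpha$ ($\alpha<\omega_1$) with $\mathcal H_0$ consisting of the empty and one-point orders and $\mathcal H_{\alpha+1}$ of the sums $\sum_{i\in\mathbb Z}L_i$ with $L_i\in\bigcup_{\beta\le\alpha}\mathcal H_\beta$; moreover every countable linear order is a sum $\sum_{q\in I}L_q$ with $I$ of size at most one or densely ordered and each $L_q$ scattered. From such a decomposition one builds a code: a well-founded tree whose nodes record, level by level, the shape of the relevant $\mathbb Z$-sum, together with the rational indexing data along the dense top layer. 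The delicate point --- and the source of the ``three'' in the hypothesis --- is that the labels needed for this bookkeeping can be taken to range over a three-element set, and that embeddability of two codes (in the appropriate tree-with-labels sense) is equivalent to embeddability of the orders they code; one must verify both directions so that the reduction is faithful.

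The second ingredient is a package of closure lemmas, each to be established for $\Delta^0_2$-arrays by the minimal-bad-array method: if $(Q,\le_Q)$ is $\Delta^0_2\textsf{-bqo}$ then so are (i) the finite sequences over $Q$ under sub-embeddability, (ii) the well-founded $Q$-labeled trees under tree-embedding (the Nash-Williams tree theorem), and (iii) the $Q$-labeled countable linear orders under label-respecting embedding, including the passage through dense sums (Laver's theorem). Granting these, one instantiates $Q$ as the three-element antichain, feeds in the coding from the previous paragraph, and concludes that the coded class of countable linear orders is $\Delta^0_2\textsf{-bqo}$, hence $\mathsf{wqo}$; together with the faithfulness of the coding and the fact that $\mathsf{ATR}_0$ supplies a code for every countable linear order, Fra\"iss\'e's conjecture follows. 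Stated contrapositively, a supposed bad sequence $L_0,L_1,\ldots$ for Fra\"iss\'e's conjecture would, after coding, produce a bad eventually constant function into the three-element antichain, contradicting the hypothesis.

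The hard part will be carrying out (ii) and (iii) for $\Delta^0_2$-arrays over the comparatively weak base theory $\mathsf{ATR}_0$. The minimal-bad-array construction involves a minimization that is not immediately available; one has to check that minimal bad $\Delta^0_2$-arrays exist, that the usual recombination of such arrays does not leave the class of eventually constant functions, and that the recursions through Hausdorff rank and through the dense-sum decomposition are uniform enough to stay inside $\mathsf{ATR}_0$. A secondary difficulty is the coding bookkeeping of the second paragraph: confirming that three labels genuinely suffice, and that the code-embedding relation agrees exactly with embeddability of linear orders.
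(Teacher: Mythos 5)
First, a point about what you are being compared against: the paper does not prove this theorem. It is imported as a black box from Montalb\'an \cite{montalban-fraisse} (the text calls it ``the deep Theorem~\ref{thm:montalban} due to Montalb\'an''), and the whole of Section~\ref{sect:cof-RT} is built on top of it. So your proposal is not an alternative to an in-paper argument but an attempted reconstruction of Montalb\'an's proof, and it has to be judged on its own merits.

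As an outline it has the right flavour (code countable linear orders by well-founded labeled trees, transfer the $\Delta^0_2\textsf{-bqo}$ property of a small label set upward), but the step you yourself flag as ``the hard part'' is where the plan breaks down rather than merely remaining to be checked. Your closure lemmas (ii) and (iii) --- that $Q$-labeled well-founded trees and $Q$-labeled linear orders inherit the $\Delta^0_2\textsf{-bqo}$ property --- are to be proved ``by the minimal-bad-array method'', but the existence of minimal bad arrays for $\Delta^0_2$-arrays is essentially of $\Pi^1_1\textsf{-CA}_0$ strength and is not available in $\mathsf{ATR}_0$. Establishing it would defeat the purpose of the theorem, which is precisely to derive Fra\"iss\'e's conjecture in $\mathsf{ATR}_0$ from the single concrete hypothesis that the three-element antichain is $\Delta^0_2\textsf{-bqo}$; a general closure theorem of Nash-Williams or Laver type for $\Delta^0_2$-arrays is much stronger than that hypothesis. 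Montalb\'an's actual argument does not prove such closure theorems. It uses his classification of countable linear orders up to equimorphism by certain finitely-labeled well-founded trees and then runs an induction on rank in which the hypothesis for the three-element antichain is invoked directly on explicitly constructed eventually constant arrays, with no minimization over a class of bad arrays. The peripheral observations in your write-up (that $\Delta^0_2\textsf{-bqo}$ implies $\mathsf{wqo}$, that a faithful finite-label coding is needed) are fine, but without a replacement for the minimal-bad-array step the argument does not go through from the stated hypothesis over $\mathsf{ATR}_0$.
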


Let us note that the antichain with two elements is provably $\Delta^0_2\textsf{-bqo}$ in weak theories~\cite{marcone-survey-old,montalban-fraisse}, while all finite orders reduce to the case of three elements~\cite{freund-3-bqo,provable-bqos}.

To conclude that Fra\"iss\'e's conjecture is provable in~$\Pi^1_1\textsf{-CA}_0$, Montalb\'an used \mbox{$\Pi^1_1$-}compre\-hension in the form of the $\Delta^0_2$-Ramsey theorem. As Fra\"iss\'e's conjecture is a $\Pi^1_2$-statement, we should only need a $\Pi^1_2$-approximation to the $\Delta^0_2$-Ramsey theorem in order to carry out the proof.

\begin{definition}\label{def:cof-RT}
By the cofinite $\Delta^0_2$-Ramsey theorem (abbreviated $\Delta^0_2\textsf{-RT(cof)}$), we mean the statement that each eventually constant function $f:[\mathbb N]^{<\omega}\to\{0,1\}$ admits an $X\in[\mathbb N]^\omega$ such that $\overline f$ is constant on~$\{Y\in[X]^\omega\,:\,X\backslash Y\text{ is finite}\}$.
\end{definition}

As revealed by the following proof, the restriction to cofinite subsets does not make a difference in the $\Delta^0_1$-case, i.\,e., for the clopen Ramsey theorem.

\begin{proposition}[$\mathsf{RCA}_0$]
The cofinite $\Delta^0_2$-Ramsey theorem entails arithmetical transfinite recursion.
\end{proposition}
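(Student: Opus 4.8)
The plan is to reduce to the clopen Ramsey theorem, which implies~$\mathsf{ATR}_0$ over~$\mathsf{RCA}_0$ by a classical theorem of reverse mathematics (see~\cite{simpson09}); since $\mathsf{ATR}_0$ proves $\mathsf{ATR}$, this is enough. In fact only those instances of $\Delta^0_2\textsf{-RT(cof)}$ for which $\overline f$ is continuous (equivalently, $\Delta^0_1$) will be used, and the homogeneous set produced will be exactly the one supplied by $\Delta^0_2\textsf{-RT(cof)}$ --- this is the content of the remark preceding the proposition.

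First I would recall that, provably in~$\mathsf{RCA}_0$, every continuous $F\colon[\mathbb N]^\omega\to\{0,1\}$ can be written as $F=\overline f$ for a suitable eventually constant $f\colon[\mathbb N]^{<\omega}\to\{0,1\}$. This is the easy $\Delta^0_1$-case of~\cite[Lemma~3.1]{montalban-fraisse}: from a code for $F$ one reads off, by a bounded search over initial segments, the value $F$ has committed to, and totality of~$F$ makes the resulting $f$ eventually constant with $\overline f=F$. The only feature of continuity that matters below is that, for each $Y\in[\mathbb N]^\omega$, there is some $n$ with $F(Z)=F(Y)$ for every $Z$ with $Z[n]=Y[n]$. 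Applying $\Delta^0_2\textsf{-RT(cof)}$ to the associated $f$ yields an~$X\in[\mathbb N]^\omega$ and a value $v\in\{0,1\}$ such that $F(Y)=v$ for every $Y\in[X]^\omega$ with $X\setminus Y$ finite.

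The key step is to upgrade this to $F(Y)=v$ for \emph{all} $Y\in[X]^\omega$. Fix such a $Y$; if $F$ is constant there is nothing to prove, so choose $n\ge 1$ with $F(Z)=F(Y)$ whenever $Z[n]=Y[n]$, and set $Y^\ast=Y[n]\cup\{x\in X:x>\max(Y[n])\}$. Then $Y^\ast$ is an infinite subset of~$X$, and $X\setminus Y^\ast$ is contained in the finite set $\{x\in X:x\le\max(Y[n])\}$, so $Y^\ast$ is cofinite in~$X$ and hence $F(Y^\ast)=v$. On the other hand $Y^\ast[n]=Y[n]$, since the remaining elements of~$Y^\ast$ all exceed $\max(Y[n])$, so $F(Y^\ast)=F(Y)$, and therefore $F(Y)=v$. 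Everything here takes place in~$\mathsf{RCA}_0$, as $Y^\ast$ is $\Delta^0_1$-definable from $Y[n]$ and~$X$ and every bounded set of naturals is finite. Thus $X$ is homogeneous for~$F$ in the sense of the clopen Ramsey theorem, completing the reduction.

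I do not expect a serious obstacle: the passage from~$F$ to~$f$ is bookkeeping, the homogeneity upgrade is the short ``shift'' argument above, and the implication ``clopen Ramsey theorem $\Rightarrow\mathsf{ATR}_0$'' is quoted. The one point that deserves a little care is to set up the representation of continuous colourings so that the phrase ``the value of~$F$ on~$Y$ is already decided by~$Y[n]$'' is literally a statement available to~$\mathsf{RCA}_0$; this is standard once one works with the usual codes for continuous functions.
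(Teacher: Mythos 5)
Your proposal is correct and follows essentially the same route as the paper: reduce to the clopen Ramsey theorem (which yields $\mathsf{ATR}$ by Theorem~V.9.7 of~\cite{simpson09}), apply $\Delta^0_2\textsf{-RT(cof)}$ to a continuous colouring, and upgrade homogeneity from cofinite subsets of~$X$ to all of~$[X]^\omega$ by replacing an arbitrary $Y\in[X]^\omega$ with a cofinite subset of~$X$ sharing the initial segment that determines the value. Your explicit construction of $Y^\ast$ just spells out the step the paper dismisses with ``clearly''.
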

\begin{proof}
We show that $\Delta^0_2\textsf{-RT(cof)}$ implies the clopen Ramsey theorem, which is known to imply~$\mathsf{ATR}$ (see Theorem~V.9.7 of~\cite{simpson09}). Consider an $\overline f:[\mathbb N]^\omega\to\{0,1\}$ that is continuous (cf.~the paragraph after Definition~\ref{def:Del02-bqo}). From $\Delta^0_2\textsf{-RT(cof)}$ we get $X\in[\mathbb N]^\omega$ and~$i<2$ such that $\overline f(Y)=i$ holds for any cofinite~$Y\subseteq X$. Now consider an arbitrary $Z\in[X]^\omega$. Given that $\overline f$ is continuous, we find an~$N\in\mathbb N$ such that $Z[N]=Z'[N]$ implies~$\overline f(Z)=\overline f(Z')$. Clearly, there is a cofinite~$Z'\subseteq X$ with~$Z[N]=Z'[N]$. We get $\overline f(Z)=i$, as the clopen Ramsey theorem demands.
\end{proof}

We note that $\Delta^0_2\textsf{-RT(cof)}$ is entailed by the principle $\mathsf{rel}(\Sigma^0_2\mathsf{Ram})$ of Suzuki and Yokoyama (though the present author came up with the cofinite $\Delta^0_2$-Ramsey theorem independently). It is shown in~\cite{suzuki-yokoyama} that $\mathsf{rel}(\Sigma^0_2\mathsf{Ram})$ can be derived from the principle~$\beta^1_0\mathsf{RFN}(2)$ that was mentioned in the previous section. The following is a (straight\-forward) improvement of this bound.

\begin{proposition}\label{prop:D02Ram-Pi11star}
The cofinite $\Delta^0_2$-Ramsey theorem is provable in~$\Pi^1_1\text{-}\mathsf{CA}^\Gamma_0$.
\end{proposition}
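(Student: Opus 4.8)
The plan is to pass to an $\omega$-model supplied by $\Pi^1_1\textsf{-CA}^\Gamma$, to run the ordinary $\Delta^0_2$-Ramsey theorem for the given colour function inside that model, and then to note that the \emph{cofinite} homogeneity statement --- unlike full homogeneity --- becomes, once a candidate set~$X$ is fixed, an arithmetical (indeed $\Delta^0_2$) condition, so that the $\Sigma^1_1$-statement asserting the existence of such an~$X$ reflects back down to the real world. The restriction to cofinite subsets in Definition~\ref{def:cof-RT} is precisely what makes this last step possible.

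In detail: let $f:[\mathbb N]^{<\omega}\to\{0,1\}$ be eventually constant. Applying $\Pi^1_1\textsf{-CA}^\Gamma$ to~$f$, we obtain an $\omega$-model $\mathcal M\ni f$ with $\mathcal M\vDash\mathsf{ATR}_0+\Pi^1_1\textsf{-CA}_0(f)$; by Proposition~\ref{prop:Pi11min-beta} we may further arrange that $\mathcal M$ contains a coded $\beta$-model $\mathcal M_0\ni f$ with $\mathcal M\vDash$ ``$\mathcal M_0\ni f$ is a coded $\beta$-model''. Since ``$f$ is eventually constant'' is a $\Pi^1_1$-statement whose only set parameter is~$f$, it is downward absolute, hence holds in~$\mathcal M$, so $\overline f$ is a genuine $\Delta^0_2(f)$-colouring from the viewpoint of~$\mathcal M$. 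I would next argue that $\mathcal M$ proves the ordinary $\Delta^0_2$-Ramsey theorem for this single colouring, i.e., that there are $X\in\mathcal M$ and $i<2$ with $\mathcal M\vDash$ ``$\overline f$ is identically $i$ on $[X]^\omega$''; in particular $\mathcal M\vDash\forall s\in[X]^{<\omega}\,(\overline f(X\setminus s)=i)$, since $X\setminus s\in[X]^\omega$ for every finite~$s$.

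I expect the main obstacle to be the verification that $\mathcal M$ can carry out this proof with the \emph{parameter-restricted} comprehension $\Pi^1_1\textsf{-CA}_0(f)$ that it has available. The natural proof that $\Pi^1_1\textsf{-CA}_0$ implies the $\Delta^0_2$-Ramsey theorem (as used in~\cite{montalban-fraisse}) builds the homogeneous set by a recursion that a priori invokes comprehension relative to sets constructed along the way. To see that this is harmless one checks that, for a fixed colouring, these intermediate sets can be kept $\Pi^1_1(f)$-definable; equivalently, one runs the whole argument relative to the coded $\beta$-model~$\mathcal M_0$, which contains every $\Pi^1_1(f)$-definable set, as follows. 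Reasoning inside~$\mathcal M$: for a $\Pi^1_1$ formula $\varphi$ with parameters in~$\mathcal M_0$, the set $\{n:\mathcal M_0\vDash\varphi(n)\}$ is arithmetic in~$\mathcal M_0$, hence exists by $\mathsf{ACA}_0$, and it coincides with $\{n:\varphi(n)\}$ by the $\beta$-correctness of~$\mathcal M_0$; so $\Pi^1_1$-comprehension for such formulas is available in~$\mathcal M$. Alternatively one replaces the recursion by a single transfinite recursion along a well order coded in~$f$, which $\mathcal M\vDash\mathsf{ATR}_0$ handles directly. Granting this, $\mathcal M$ delivers the desired $X$ and~$i$.

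Finally I would transfer. Every cofinite subset $Y\subseteq X$ has the form $X\setminus s$ with $s=X\setminus Y$ a finite set, coded by a natural number, and for fixed $X$, $s$ and~$i$ the relation $\overline f(X\setminus s)=i$ is $\Delta^0_2$ in~$f$. Hence ``$\exists X\,\exists i<2\,\forall s\in[X]^{<\omega}\,(\overline f(X\setminus s)=i)$'' is $\Sigma^1_1$ with~$f$ as its only set parameter. Since $\mathcal M$ is an $\omega$-model containing a witness~$X$, this statement is upward absolute and therefore holds in the real world; the witnessing $X$ and~$i$ say exactly that $\overline f$ is constant on $\{Y\in[X]^\omega:X\setminus Y\text{ is finite}\}$. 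All the reasoning about coded $\omega$-models, their arithmetical satisfaction relations and the two absoluteness steps is available in $\mathsf{ACA}_0\subseteq\Pi^1_1\textsf{-CA}^\Gamma_0$, so this establishes $\Delta^0_2\textsf{-RT(cof)}$.
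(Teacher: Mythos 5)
Your proposal is correct and follows essentially the same route as the paper: pass to the models $\mathcal M_0\in\mathcal M_1$ from Proposition~\ref{prop:Pi11min-beta}, prove full homogeneity of $\overline f$ on some $[X]^\omega$ inside $\mathcal M_1$ using the coded $\beta$-model, and transfer the cofinite instances back to the real world by arithmetical absoluteness for $\omega$-models. The only difference is that where you gesture at running ``the ordinary $\Delta^0_2$-Ramsey proof'' with parameter-restricted comprehension, the paper makes this step concrete by invoking Lemma~VI.6.2 of~\cite{simpson09} (which uses the $\beta$-model $\mathcal M_0$ to reduce the $\Sigma^0_2$ condition to $\Sigma^0_1$) followed by the open Ramsey theorem in $\mathcal M_1\vDash\mathsf{ATR}_0$ --- which is exactly the first of the two work-arounds you sketch.
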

\begin{proof}
Aiming at $\Delta^0_2\textsf{-RT(cof)}$, we consider an arbitrary $f:[\mathbb N]^{<\omega}\to 2$ that is eventually constant. We invoke Proposition~\ref{prop:Pi11min-beta} to get coded $\omega$-models~$\mathcal M_0\in\mathcal M_1$~with
\begin{equation*}
\mathcal M_1\vDash\mathsf{ATR}_0+\text{``$\mathcal M_0\ni f$ is a coded $\beta$-model"}.
\end{equation*}
Note that $\mathcal M_1$ satisfies the statement that $f$ is eventually constant, as the latter has complexity~$\Pi^1_1$. Hence $\overline f$ is correctly defined in~$\mathcal M_1$. It suffices to show that there is an infinite~$X\in\mathcal M_1$ with
\begin{equation*}
\mathcal M_1\vDash\text{``$\overline f$ is constant on~$[X]^\omega$"},
\end{equation*}
since any cofinite subset of~$X$ is an element of~$\mathcal M_1$. The open claim holds essentially by Lemma~VI.6.2 of~\cite{simpson09}, which proves the arithmetical Ramsey theorem with the help of $\beta$-models. In order to make this more explicit, we write $\exists x\,\psi(x,Y)$ for the $\Sigma^0_2$-formula~$\overline f(Y)=0$ with parameter~$f$ (where $\psi$ is $\Pi^0_1$). Recall that infinite subsets of~$\mathbb N$ can be identified with strictly increasing functions (enumerations). Under this identification, infinite subsets of~$Z\in[\mathbb N]^\omega$ correspond to compositions~$Z\circ Y$ with~$Y\in[\mathbb N]^\omega$. The proof of Lemma~VI.6.2 in~\cite{simpson09} yields an infinite~$Z\in\mathcal M_1$ such that $\psi(x,Z\circ Y)$ is a $\Sigma^0_1$-property of $x$ and~$Y\in\mathcal M_1$ (with parameters in~$\mathcal M_1$).  Finally, the open Ramsey theorem in $\mathcal M_1\vDash\mathsf{ATR}_0$ (see Lemma~V.9.4 of~\cite{simpson09}) yields an infinite $Z'\in\mathcal M_1$ with
\begin{equation*}
\mathcal M_1\vDash\forall Y\in[\mathbb N]^\omega\,\overline f(Z\circ Z'\circ Y)=0\lor\forall Y\in[\mathbb N]^\omega\,\overline f(Z\circ Z'\circ Y)\neq 0.
\end{equation*}
So the open claim from above holds for the set $X$ that is enumerated by~$Z\circ Z'$.
\end{proof}

Let us discuss a complication that will come up in the following.

\begin{remark}\label{rmk:cof-RT-colours}
In contrast to the usual versions of Ramsey's theorem (which say that a function becomes constant on all of~$[X]^\omega$ for some~$X\in[\mathbb N]^\omega$), our cofinite \mbox{$\Delta^0_2$-}Ramsey theorem and the versions of Suzuki and Yokoyama~\cite{suzuki-yokoyama} cannot be applied iteratively. As a consequence, we do not seem able to derive the corresponding statements for more than two colours. More explicitly, to show that a $\Delta^0_2$-function $F:[\mathbb N]^\omega\to\{0,1,2\}$ is constant on some set~$[X]^\omega$, we can use a first application of the $\Delta^0_2$-Ramsey theorem to learn that $F$ takes at most two values on a set~$[Y]^\omega$. By a second application of the theorem modulo~$Y\cong\mathbb N$, we then get the desired~$X\subseteq Y$. In the case of the cofinite version, the first application only tells us that $F$ takes two values on the cofinite subsets of~$Y$, which is not enough to justify a second application.\pagebreak We could show that $\Pi^1_1\text{-}\mathsf{CA}^\Gamma_0$ also proves the cofinite $\Delta^0_2$-Ramsey theorem for each finite number of colours (combine the previous proof with the one of Lemma~3.2 from~\cite{montalban-fraisse}). This would yield a straightforward proof~of Fra\"iss\'e's conjecture in $\Pi^1_1\text{-}\mathsf{CA}^\Gamma_0$ (given the deep Theorem~\ref{thm:montalban} due to Montalb\'an). However, we wish to prove the stronger result that Fra\"iss\'e's conjecture follows from the cofinite $\Delta^0_2$-Ramsey theorem for two colours, which requires an additional argument.
\end{remark}

In view of Proposition~\ref{prop:D02Ram-Pi11star} and Theorem~\ref{thm:montalban} (the latter due to Montalb\'an), the following completes the proof of Theorem~\ref{thm:Fraisse-Pi11star} from the introduction.

\begin{theorem}[$\mathsf{ACA}_0$]
The cofinite $\Delta^0_2$-Ramsey theorem entails that the antichain with three elements is $\Delta^0_2\text{-}\mathsf{bqo}$.
\end{theorem}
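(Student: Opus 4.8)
The plan is to apply the cofinite $\Delta^0_2$-Ramsey theorem just once, to a two-colouring obtained by comparing consecutive values of $\overline f$ in an auxiliary \emph{linear} order on the colours; this sidesteps the failure of iteration discussed in Remark~\ref{rmk:cof-RT-colours}. Concretely, since the antichain order $\leq_Q$ on $\{0,1,2\}$ is just equality, the assertion unpacks (via Definition~\ref{def:Del02-bqo}) to: every eventually constant $f\colon[\mathbb N]^{<\omega}\to\{0,1,2\}$ admits $X\in[\mathbb N]^\omega$ with $\overline f(X)=\overline f(X\setminus\{\min X\})$. So I would fix such an $f$, order $\{0,1,2\}$ by the linear order $0<1<2$ (not to be confused with $\leq_Q$), write $X^-:=X\setminus\{\min X\}$, and define $c\colon[\mathbb N]^\omega\to\{0,1\}$ by $c(X)=0$ if $\overline f(X)\leq\overline f(X^-)$ and $c(X)=1$ otherwise. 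Because ``$\overline f(Y)=i$'' is a $\Delta^0_2$ relation (as noted just before Definition~\ref{def:Del02-bqo}) and $X\mapsto X^-$ is computable, $c$ is a Boolean combination of $\Delta^0_2$ formulas and hence $\Delta^0_2$; by \cite[Lemma~3.1]{montalban-fraisse} we have $c=\overline g$ for some eventually constant $g\colon[\mathbb N]^{<\omega}\to\{0,1\}$, provably in $\mathsf{ACA}_0$.

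Next I would apply the cofinite $\Delta^0_2$-Ramsey theorem to $g$, obtaining $X\in[\mathbb N]^\omega$ and $i_0\in\{0,1\}$ with $c(Y)=i_0$ for every $Y\in[X]^\omega$ such that $X\setminus Y$ is finite. For each $n$ the tail $X^{(n)}:=X\setminus X[n]=\{X(n),X(n+1),\dots\}$ is of this form and satisfies $(X^{(n)})^-=X^{(n+1)}$, so $c(X^{(n)})=i_0$ for all $n\in\mathbb N$. The function $h\colon n\mapsto\overline f(X^{(n)})$ is $\Delta^0_2$-definable with parameters $f,X$, hence exists as a set $h\colon\mathbb N\to\{0,1,2\}$ by arithmetical comprehension, and by the choice of $c$ either $i_0=1$ and $h$ is strictly decreasing, or $i_0=0$ and $h$ is nondecreasing.

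The case $i_0=1$ is impossible, since there is no strictly decreasing $\omega$-sequence in a three-element linear order (already provable in $\mathsf{RCA}_0$). So $i_0=0$, whence $h$ is nondecreasing and bounded by $2$ and therefore eventually constant; fixing $N$ with $h(N)=h(N+1)$ gives $\overline f(X^{(N)})=\overline f(X^{(N+1)})=\overline f((X^{(N)})^-)$, so $\overline f(X^{(N)})\leq_Q\overline f((X^{(N)})^-)$ and $X^{(N)}$ witnesses Definition~\ref{def:Del02-bqo}. The same argument with $\{0,\dots,k-1\}$ in its linear order in place of $\{0,1,2\}$ would show more generally that the antichain with $k$ elements is $\Delta^0_2\textsf{-bqo}$ for every $k$.

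I do not anticipate a genuine obstacle: the one substantive idea is the colouring $c$, engineered so that the two Ramsey cases become an impossible infinite descent and a stabilising monotone sequence — trading the unavailable iteration of the cofinite Ramsey theorem for the elementary fact that a nondecreasing $\omega$-sequence in a finite linear order stabilises while a strictly decreasing one cannot exist. The only points needing care are the $\Delta^0_2$ bookkeeping: that ``$\overline f(X)\leq\overline f(X^-)$'' is $\Delta^0_2$, that it has an eventually constant representative $g$ suitable for the cofinite Ramsey theorem, and that the tail-value function $h$ is a set in $\mathsf{ACA}_0$ — all routine given the remarks recorded earlier in this section and \cite[Lemma~3.1]{montalban-fraisse}.
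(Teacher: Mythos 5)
Your proof is correct, and it takes a genuinely different route from the paper's. The paper also makes exactly one application of the cofinite $\Delta^0_2$-Ramsey theorem, but to the colouring that asks whether $\overline f(Y)=2$: this either settles the claim outright or reduces to two colours on the cofinite subsets of some $Z$, after which the paper runs the Marcone--Montalb\'an argument (choose $s\in[Z]^{<\omega}$ with $f(s)=f(t)$ for all $t\sqsupset s$, then derive a contradiction from a parity count on $\overline f(V^{-i})$); that second stage is precisely the ``additional argument'' announced in Remark~\ref{rmk:cof-RT-colours}. You instead apply the cofinite Ramsey theorem to the derived colouring $c(Y)=[\overline f(Y)\le\overline f(Y^-)]$ with respect to an auxiliary linear order on the colours, and then read off the conclusion from the tail sequence $n\mapsto\overline f(X^{(n)})$: strictly decreasing is impossible in a finite order, nondecreasing must stabilise. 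All the supporting steps you flag are indeed covered by the paper's own remarks ($\overline f(X)=q$ is $\Delta^0_2$, $X\mapsto X^-$ is computable, and \cite[Lemma~3.1]{montalban-fraisse} supplies the eventually constant representative $g$ over $\mathsf{ACA}_0$), and you only invoke the cofinite theorem once, so the non-iterability issue never arises. What your version buys is uniformity: it proves the $k$-element antichain case for every finite $k$ directly, whereas the paper obtains finite antichains from the three-element case via external reduction results, and its two-colour stage is tied to the specific parity structure of $\{0,1\}$. What the paper's version buys is transparency of lineage: it exhibits the result as the known two-colour argument plus one extra Ramsey application, which is the narrative Remark~\ref{rmk:cof-RT-colours} is setting up.
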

\begin{proof}
Consider a function $f:[\mathbb N]^{<\omega}\to\{0,1,2\}$ that is eventually constant. In view of Definition~\ref{def:Del02-bqo}, we need to show that there is an $X\in[\mathbb N]^\omega$ with $\overline f(X)=\overline f(X^-)$, where we abbreviate $X^-=X\backslash\{\min(X)\}$. By the cofinite $\Delta^0_2$-Ramsey theorem, we find a $Z\in[\mathbb N]^\omega$ such that one of the following two cases applies. In the first case, we have $\overline f(Y)=2$ for all cofinite $Y\subseteq Z$. The open claim is then satisfied for $X=Z$ (as $Z^-\subseteq Z$ is cofinite). The rest of the proof is concerned with the remaining case, in which we have $\overline f(Y)\in\{0,1\}$ for all cofinite~$Y\subseteq Z$. We note that the following argument is very similar to Montalb\'an's proof~\cite{montalban-fraisse} that $\{0,1\}$ is $\Delta^0_2\textsf{-bqo}$ over~$\mathsf{ACA}_0$, which is based on Marcone's proof~\cite{marcone-survey-old} that $\{0,1\}$ is $\Delta^0_1\textsf{-bqo}$ over~$\mathsf{RCA}_0$.

For $s,t\in[\mathbb N]^{<\omega}$, we write $s\sqsubset t$ to indicate that $s$ is a proper initial segment of~$t$, i.\,e., that there are $Y\in[\mathbb N]^\omega$ and $m<n$ with $s=Y[m]$ and $t=Y[n]$. Given $s\in[\mathbb N]^{<\omega}$ and $Y\in[\mathbb N]^\omega$, we put $Y/s=\{n\in Y\,|\,m<n\text{ for all }m\in s\}$.

We may fix an $s\in[Z]^{<\omega}$ such that $f(s)=f(t)$ holds for all~$t\in[Z]^{<\omega}$ with~$s\sqsubset t$. Indeed, if no such $s$ did exist, we would find $s_0\sqsubset s_1\sqsubset\ldots$ with $s_i\in[Z]^{<\omega}$ and $f(s_i)\neq f(s_{i+1})$ for all~$i\in\mathbb N$. But then $f$ would fail to be eventually constant along the infinite set $\bigcup_{i\in\mathbb N}s_i$. The choice of $s$ ensures that we have
\begin{equation*}
\overline f(V)=\overline f(W)\quad\text{for }V=s\cup Z/s\text{ and }W=s\cup(Z/s)^-.
\end{equation*}
Let us declare $Y^{-0}=Y$ and $Y^{-(i+1)}=(Y^{-i})^-$, so that $Y^{-i}$ is $Y$ without its $i$ smallest elements. When~$j$ is the size of~$s$, we have $V^{-j}=Z/s$ and $W^{-j}=(Z/s)^-$. Towards a contradiction, we assume that there is no $X\in[\mathbb N]^\omega$ with $\overline f(X)=\overline f(X^-)$. Since each $V^{-i}$ is a cofinite subset of~$Z$, we get
\begin{equation*}
\overline f(V^{-i})=1-\overline f(V^{-(i+1)})\in\{0,1\}.
\end{equation*}
The same holds with $W$ at the place of~$V$. When $j$ is even, this yields
\begin{equation*}
\overline f(Z/s)=\overline f(V^{-j})=\overline f(V)=\overline f(W)=\overline f(W^{-j})=\overline f((Z/s)^-)=1-\overline f(Z/s).
\end{equation*}
A similar contradiction arises when~$j$ is odd.
\end{proof}

\bibliographystyle{amsplain}
\bibliography{Fraisse-PartialImpred}

\end{document}